\documentclass[letterpaper, 10pt, conference]{ieeeconf}
\IEEEoverridecommandlockouts        
\overrideIEEEmargins
\setlength{\parskip}{2pt}




\title{\LARGE \bf Network Topology Change Identification from Inverse Covariance Matrices}

\title{\LARGE \bf Identifying Edge Changes in Networks from Inverse Covariance Matrices}

\title{\LARGE \bf Edge Changes Identification in Infrastructure Networks: A Sparse Total Least Squares Approach}

\title{\LARGE \bf Structure Learning via ADMM in Networks obeying Conservation Laws}

\author{Rohith Reddy Mada and Rajasekhar Anguluri
	\thanks{The authors are with the Department of Computer Science and Electrical Engineering, University of Maryland, Baltimore County, MD 85281, USA (e-mails: \href{rohithm4@umbc.edu}{rohithm4@umbc.edu} and \href{rajangul@umbc.edu}{rajangul@umbc.edu}). The first author is a second-year master's student, and the last author is an assistant professor.}}


\usepackage{graphics,color}
\usepackage[outdir=./]{epstopdf}
\usepackage{amsmath}
\usepackage{amssymb}
\usepackage{mathrsfs}
\usepackage{subfigure}
\usepackage[font=small,skip=5pt]{caption}
\usepackage{url}
\usepackage{float}
\usepackage{dsfont}
\usepackage{booktabs}
\usepackage{array}
\usepackage[table]{xcolor}

\usepackage{enumitem}
\usepackage{optidef} 
\usepackage{float}
\usepackage[colorlinks=true, linkcolor=black, citecolor=black, urlcolor=black]{hyperref}
\usepackage{makecell}

\usepackage{bbold} 

\newtheorem{theorem}{\bf \emph{Theorem}}[section]

\newtheorem{proposition}[theorem]{Proposition}

\newtheorem{remark}{Remark}

\newcommand{\transpose}{\mathsf{T}} 

\newcommand*{\QEDW}{\hfill\ensuremath{\square}}


\newcommand{\vertiii}[1]{{\left\vert\kern-0.25ex\left\vert\kern-0.25ex\left\vert #1 \right\vert\kern-0.25ex\right\vert\kern-0.25ex\right\vert}}
\DeclareMathOperator{\Tr}{Tr}


\newcommand\oprocendsymbol{\hbox{$\square$}}
\newcommand\oprocend{\relax\ifmmode\else\unskip\hfill\fi\oprocendsymbol}





\usepackage[linesnumbered,ruled,vlined]{algorithm2e}

\usepackage{tabstackengine}
\setstackEOL{\cr}

\begin{document}
	\maketitle
	
	\thispagestyle{empty} \pagestyle{empty}
	
\begin{abstract} Learning the edge connectivity structure of networked systems from limited data is a fundamental challenge in many critical infrastructure domains, including power, traffic, and finance. Such systems obey steady-state conservation laws: \( x = L^{\ast} y \), where \( x \) and \( y \in \mathbb{R}^p \) represent injected flows (inputs) and potentials (outputs), respectively. The sparsity pattern of the \( p \times p \) Laplacian \( L^{\ast} \) encodes the underlying edge structure. In a stochastic setting, the goal is to infer this sparsity pattern from zero-mean i.i.d. samples of \( y \).  

Recent work by \cite{rayas2022learning} has established statistical consistency results for this learning problem by considering an $\ell_1$-regularized maximum likelihood estimator. However, their approach did not develop a scalable algorithm but relies on solving a convex program via the CVX package. To address this gap, we propose an alternating direction method of multipliers (ADMM), which is transparent and fast. A key contribution is to demonstrate the role of an algebraic matrix Riccati equation in the primal update step of ADMM. Numerical experiments on a host of synthetic and benchmark networks, including power and water systems, show the efficiency of our method.

	\end{abstract}

	\section{Introduction}

Critical infrastructure systems—such as power, water, and traffic networks—are modeled as graphs, where conservation laws enforce that nodal flows across the edges of the network are neither created nor lost. In steady state, these conservation laws reduce to linear relation \( x = L^* y \), where \( L^* \in \mathbb{R}^{p \times p} \) is a weighted symmetric Laplacian encoding the network's edge structure through its off-diagonal entries \cite{van2017modeling}. The $p$-dimensional vectors $x$ and $y$ denote nodal injections and potentials, respectively.
 
In practice, the edge connectivity structure is unknown and must be learned from data \cite{deka2023learning, deka2020graphical, cavraro2021learning}. This entails recovering the sparsity pattern of the unknown (true) \( L^* \) using finite samples or summary statistics of the injection–potential pairs \( \{ x, y \} \). Assuming random injection $x$, and given the sample covariance matrix \( S \) of \( y \),~\cite{rayas2022learning} proposed estimating the network structure by solving the optimization problem:
\begin{align}\label{eq: log-det}
\hat{L}=\underset{L \in \mathcal{S}_p^{++}}{\text{arg\,min  }} \Tr(SL^2) - 2\log\det(L) + \lambda \| L \|_{1,\text{off}},
\end{align}
where \( \mathcal{S}_p^{++} \) is the set of real-valued $p\times p$ symmetric positive definite matrices and $\|L\|_{1,\text{off}}$ is the $\ell_1$-norm applied to  off-diagonals of $L$. The edge structure can then be inferred from the sparsity pattern of the estimate \( \hat{L} \) (see Section~\ref{sec: prelims}).
 
Ref.~\cite{rayas2022learning} have established many properties of the problem in \eqref{eq: log-det}. Chief among them are that the problem is convex and that its statistical complexity depends on the maximum degree of the network. Further, \eqref{eq: log-det} is related to many graph learning problems (see related work in~\cite{rayas2022learning, rayas_stationary2025}). Thus, developing simple and fast iterative algorithms for solving~\eqref{eq: log-det} is of great interest. This motivates our use of the alternating direction method of multipliers (ADMM).  
We highlight that although~\eqref{eq: log-det} resembles the sparse inverse covariance estimation (ICE) problem in graphical models, a key difference lies in the trace $\operatorname{Tr(\cdot)}$. In~\eqref{eq: log-det}, \( \operatorname{Tr}(\cdot) \) is quadratic in the variable \( L \), whereas in ICE it is linear. This subtle yet significant distinction makes the development of an ADMM for~\eqref{eq: log-det} challenging and interesting. Our contributions are: 




{\color{black} \begin{enumerate}
    \item For the unregularized case ($\lambda = 0$), we show that the global minimizer $\hat{L}$ of \eqref{eq: log-det0} is equal to the inverse of the positive semi-definite square root matrix of the sample covariance $S$, if inverse exists (see Prop.~\ref{prop: unrestricted MLE}).
    \item We propose a simple ADMM algorithm to numerically solve the structure learning problem in \eqref{eq: log-det0} for any $\lambda \geq 0$. We show that the dual and Lagrange multiplier update subproblems can be solved in closed form.
    \item The primal subproblem of our ADMM algorithm does not admit closed-form solution, as it involves solving an algebraic Riccati equation (NARE). Therefore, we resort to a Newton's method to solve the NARE. In Remark \ref{rem: ADMM comparision}, we clarify why this is not an issue in the closely related sparse ICE problem in Gaussian graphical models.
\end{enumerate}
Numerical results on many synthetic and real-world benchmark networks demonstrate the superior efficiency (in terms of iterations and execution time; see Table.~\ref{tab:graph_comparison}) of our ADMM method compared to the CVX-based approach in \cite{rayas2022learning}. The limitations of the proposed approach and future work are discussed toward the end of the paper.


{\color{black}\textit{Related Research}: There is a wide literature on developing ADMM-based methods for matrix-valued statistical learning problems (see \cite{boyd2011distributed, ma2014gentle, lin2022alternating}). Structure learning in conservation networks has been studied in various contexts, which broadly fall into two categories. The first is the mean-based approach, where both \(x\) and \(y\) are observed, reducing the learning problem to a sparse regression task \cite{ardakanian2019identification, brouillon2021bayesian, babakmehr2016smart}. The second category assumes \(x\) and \(y\) are random, but only \(y\) is observed. Here, the network is inferred from the (sample) covariance of \(y\) (see \cite{deka2020graphical, rayas2023differential, rayas2022learning, rayas2024learning}). Notably, \cite{deka2023learning} and \cite{cavraro2021learning} surveys structure learning in power systems. 

\section{Preliminaries and Problem Setup}\label{sec: prelims}
Consider an undirected, connected graph with \( p + 1 \) nodes, where the node set is \( V = \{0, 1, 2, \ldots, p\} \) and the edge set \( E \subset V \times V \). Each edge \( (i, j) \in E \) is assigned a non-negative weight \( a_{ij} \geq 0 \). Let \( x, y \in \mathbb{R}^{p+1} \) denote the vectors of nodal injections and potentials, respectively. The conservation law is represented by \( x = L_{\text{full}}^* y \), where \( L_{\text{full}}^* \in \mathbb{R}^{(p+1) \times (p+1)} \) is the full Laplacian matrix defined by
\[
[L^*_\text{full}]_{ij} = 
\begin{cases}
- a_{ij}, & \text{if } i \ne j, \\
\sum_{k \ne i} a_{ik}, & \text{if } i = j.
\end{cases}
\]
Thus, \( (i, j) \in E \) iff \( [L^*_\text{full}]_{ij} \ne 0 \). The applicability of linear conservation laws and their extensions is elegantly discussed in~\cite{van2017modeling}. Fig.\ref{eq: sparsity of L} illustrates the sparsity pattern of $L^*$.

Following~\cite{rayas_stationary2025, rayas2022learning, deka2020graphical}, we work with the reduced Laplacian matrix \( L^* \), obtained by removing node \( 0 \) and its incident edges from the graph. In particular, \( L^* \in \mathbb{R}^{p \times p} \) is formed by deleting the row and column corresponding to node \( 0 \) from \( L_{\text{full}}^* \). The reduced matrix is symmetric and positive definite, and therefore invertible, ensuring that \( L^* \), or at least its sparsity pattern, can be recovered from data.

  	\begin{figure}
		\centering
		\includegraphics[width=1.0\linewidth]{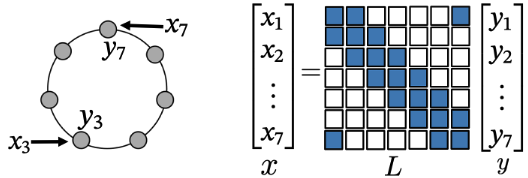}
		\caption {\small {\bf sparsity pattern of a Laplacian encoding edge structure}: (left) A network consisting of seven nodes labeled in a counterclockwise direction, with node 1 positioned to the left of node 7. Each node is assigned a pair \((x_i, y_i)\), where arrows represent the injection of flows \(x_i\). (right) The indices of blue-colored cells in \( L^* \) indicate the presence of edges between nodes identified by these indices, while white-colored cells signify the absence of edges.}\label{eq: sparsity of L}
        \end{figure}
\setlength{\textfloatsep}{2pt plus 1.0pt minus 2.0pt}
  
\textbf{Structure learning problem }\cite{rayas2022learning}: \textit{Estimate the sparsity pattern of the unknown true Laplacian matrix \( L^* \) using \( N \) i.i.d. samples of the potential vector \( y \), assuming that the nodal injection vector \( x \) is a zero-mean Gaussian random vector with a known, inverse covariance matrix \( \Theta_x \succ 0 \).}

The Gaussian random injection assumption is common in the literature on network structure learning and is motivated by studies in power systems, where the injections are random load fluctuations \cite{deka2023learning, cavraro2021learning}. Thus, the available data consists of \( (\Theta_x, \{y_1,\ldots,y_N\}) \), but not the actual flow injections \( x \).

\subsection{An $\ell_1$-regularized Maximum Likelihood Estimator}
Ref.~\cite{rayas2022learning} framed the structure learning problem as an \( \ell_1 \)-regularized maximum likelihood estimation (MLE), leading to the optimization problem given in~\eqref{eq: log-det}. For completeness, we provide a high-level overview of the MLE derivation.

Let $x\sim \mathcal{N}(0,\Theta_x^{-1})$. Because $L^*$ is invertible, the conservation law can be written as $y = (L^*)^{-1}x$. From Gaussianity of $x$, it follows that $y\sim \mathcal{N}(0,\Theta_y^{-1})$, where
$\Theta_y=L^*\Theta_xL^*$. Consider $N$ i.i.d samples $\{y_,\ldots,y_N\}$ and define the $N\times N$ sample covariance matrix $S=N^{-1}\sum_{t=0}^Nyy^\transpose$. 
The log-likelihood of the data $y_1,\ldots,y_N$ is given by
\begin{align}\label{eq:log-likelihood-L}
    g(L^*) \propto 2 \log\det(L^*) - \operatorname{Tr}(S L^* \Theta_x L^*) , 
\end{align}
where $\propto$ is the proportionality symbol. While \( g(L^*) \) cannot be evaluated since \( L^* \) is unknown, we may instead consider \( g(L) \) for a generic positive definite matrix \( L \). The MLE then seeks a matrix \( L \succ 0 \) that maximizes \( g(L) \).

We adopt the minimization framework to arrive at the \emph{$\ell_1$-norm regularized maximum likelihood estimator (MLE)}: 
\begin{align}\label{eq: log-det0}
\hat{L}=\underset{L \in \mathcal{S}_p^{++}}{\text{arg\,min  }} \Tr(SL \Theta_x L) - 2\log\det(L) + \lambda \| L \|_{1,\text{off}}. 
\end{align}
Here, \( \mathcal{S}_p^{++} \) denotes the set of \( p \times p \) real symmetric positive definite matrices; \( \| L \|_{1,\text{off}} = \sum_{i \neq j} |L_{ij}| \) is the \( \ell_1 \)-norm applied to the off-diagonal entries of the optimization variable \( L \); and \( \lambda \geq 0 \) is a tuning parameter. The inclusion of the \( \ell_1 \)-penalty promotes sparsity in the estimate \( \hat{L} \). The formulation in \eqref{eq: log-det0} is general version of \eqref{eq: log-det} where in $\Theta_x=I$. 

Ref.~\cite{rayas2022learning} showed that the regularized MLE problem in~\eqref{eq: log-det0} is convex, and that the solution \( \hat{L} \) is unique for all \( \lambda > 0 \), even when \( N < p \). Note that the optimization domain in~\eqref{eq: log-det0} is \( \mathcal{S}_p^{++} \), not the set of reduced Laplacian matrices, which form a strict subset of \( \mathcal{S}_p^{++} \). Recent work~\cite{kumar2019structured} has addressed optimization over Laplacians directly, but that is beyond the scope of this paper. 

Before presenting an ADMM-based algorithm to compute \( \hat{L} \), we first state an interesting result for the unrestricted MLE \( \lambda = 0 \), which is new.




\begin{proposition}\label{prop: unrestricted MLE}
    Let $\lambda=0$ and assume that $N>p$. Then with probability one, the unrestricted MLE (the unique global minimizer of \eqref{eq: log-det0}) is $\hat{L}=S^{-1 / 2}\left(S^{1 / 2} \Theta_x^{-1} S^{1 / 2}\right)^{1 / 2} S^{-1 / 2}$.
    \end{proposition}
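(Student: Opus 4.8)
The plan is to exploit that with $\lambda=0$ the objective $f(L)=\Tr(SL\Theta_x L)-2\log\det L$ is strictly convex and coercive on $\mathcal{S}_p^{++}$, so its unique global minimizer is the unique $L\succ 0$ solving the first-order condition $\nabla f(L)=0$. The quadratic term is a convex quadratic form in $L$ (its Hessian is positive semidefinite because $S,\Theta_x\succ 0$), while $-2\log\det L$ is strictly convex and blows up as $L$ approaches the boundary of $\mathcal{S}_p^{++}$; together these give strict convexity and coercivity, hence existence and uniqueness. Since $N>p$ and the samples come from a nondegenerate Gaussian, $S\succ 0$ with probability one, which is precisely what makes the claimed formula (and the congruence below) well defined. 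This reduces the whole statement to checking that the claimed matrix solves $\nabla f(L)=0$.

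First I would compute the gradient. The log-det term contributes $-2L^{-1}$. For the quadratic term the differential is $\Tr(S\,dL\,\Theta_x L)+\Tr(SL\Theta_x\,dL)=\Tr[(\Theta_x L S+SL\Theta_x)\,dL]$, and since $S,\Theta_x,L$ are symmetric the matrix $\Theta_x L S+SL\Theta_x$ is already symmetric, so $\nabla f(L)=SL\Theta_x+\Theta_x L S-2L^{-1}$. Setting this to zero yields the stationarity equation $SL\Theta_x+\Theta_x L S=2L^{-1}$, a symmetric Riccati/Sylvester-type matrix equation.

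The key step is to diagonalize this equation by a congruence. Because $S\succ 0$, I would write $L=S^{-1/2}\tilde L S^{-1/2}$ (so $\tilde L:=S^{1/2}LS^{1/2}$ is a bijection of $\mathcal{S}_p^{++}$ onto itself), substitute into the stationarity equation, and pre- and post-multiply by $S^{-1/2}$. This collapses the equation to $\tilde L B+B\tilde L=2\tilde L^{-1}$, where $B:=S^{-1/2}\Theta_x S^{-1/2}\succ 0$. Now I would try the ansatz $\tilde L=B^{-1/2}$: since any power of $B$ commutes with $B$, the left side becomes $2B^{-1/2}B=2B^{1/2}$, which equals $2\tilde L^{-1}$, so the ansatz solves the reduced equation. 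Undoing the congruence and using $B^{-1}=S^{1/2}\Theta_x^{-1}S^{1/2}$ gives $\hat L=S^{-1/2}B^{-1/2}S^{-1/2}=S^{-1/2}(S^{1/2}\Theta_x^{-1}S^{1/2})^{1/2}S^{-1/2}$, as claimed. Strict convexity then guarantees that this stationary point is the unique global minimizer, so no separate uniqueness argument for the matrix equation is needed.

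I expect the main obstacle to be the reduction step together with the ansatz: recognizing that the congruence $\tilde L=S^{1/2}LS^{1/2}$ turns the coupled products into the clean equation $\tilde L B+B\tilde L=2\tilde L^{-1}$, and then seeing that a solution \emph{commuting} with $B$ decouples it. The remaining work is bookkeeping—verifying that every fractional power acts on a positive definite matrix (so it exists, is unique, and is symmetric), confirming coercivity of $f$ so that a minimizer exists, and noting that $S\succ 0$ almost surely under $N>p$.
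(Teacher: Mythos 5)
Your proof is correct, but it takes a somewhat different route from the paper's. The paper computes the gradient, rewrites the stationarity condition as the algebraic Riccati equation $LSL=\Theta_x^{-1}$, and then simply cites the classical fact (Bhatia, \emph{Positive Definite Matrices}, Exercise 1.2.13) that this equation has the unique positive definite solution $S^{-1/2}(S^{1/2}\Theta_x^{-1}S^{1/2})^{1/2}S^{-1/2}$; uniqueness of the minimizer is inherited from uniqueness of the Riccati solution. You instead stay with the symmetrized gradient equation $SL\Theta_x+\Theta_x LS=2L^{-1}$, diagonalize it by the congruence $\tilde L=S^{1/2}LS^{1/2}$ to get $\tilde LB+B\tilde L=2\tilde L^{-1}$ with $B=S^{-1/2}\Theta_x S^{-1/2}$, and verify the commuting ansatz $\tilde L=B^{-1/2}$, delegating uniqueness entirely to strict convexity of the objective. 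Your version is self-contained (no external Riccati lemma) and is arguably more careful on one point: the paper's claim that the zero-gradient condition \emph{is} $LSL=\Theta_x^{-1}$ is really only the statement that this equation is sufficient for stationarity (the two become equivalent only a posteriori, via strict convexity), whereas you work with the actual first-order condition throughout. What the paper's route buys in exchange is brevity and a direct appeal to a standard uniqueness result for the matrix equation itself, independent of the optimization context. Both arguments are sound and yield the same formula.
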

\begin{proof}
    The loss function in \eqref{eq: log-det0} (for $\lambda=0$) is convex and differentiable. The zero-gradient condition is $LSL=\Theta_x^{-1}$. 
Because $N > p$ with probability one, $S\succ 0$ \cite{dykstra1970establishing}. Thus, there exists a unique positive definite square root $S^{1/2}$ satisfying $S = S^{1/2} S^{1/2}$. It can be verified that the matrix \( \hat{L} \), as stated in the proposition, satisfies the zero-gradient condition, and is the unique positive definite solution to the unrestricted MLE (see~\cite[Exercise 1.2.13]{bhatia2009positive}).
\end{proof}

Let $\Theta_x = I$. From Proposition \ref{prop: unrestricted MLE}, the unrestricted MLE is $\hat{L} = S^{-1/2}$. This estimate is intuitive, as in the population case, we have $\Theta^*_y = (L^*)^2$, which results in $L^* = (\Theta_y)^{1/2}$. Therefore, in the sample setting, we expect that the estimate of $L^*$ should equal the inverse square root of the sample covariance $S$, which is what MLE provides. Unfortunately, for $\lambda > 0$, there is no simple closed-form expression for $\hat{L}$, unlike the result in Proposition \ref{prop: unrestricted MLE}. This is because of the non-differentiability of the $\ell_1$-regularizer.




\smallskip  
\textbf{Problem}: Develop an efficient ADMM algorithm to numerically solve the maximum likelihood problem in \eqref{eq: log-det0} for any $\lambda \geq 0$, with either $N < p$ or $N \geq P$.

\smallskip 

\section{An ADMM algorithm}\label{sec: main results}

The ADMM approach (e.g., see \cite{boyd2011distributed}) is based on rewriting the optimization in \eqref{eq: log-det0} as 
\begin{equation*}
\begin{array}{ll}
\underset{L,Z\, \in \mathcal{S}_p^{++}}{\text{minimize  }} & \Tr(SL \Theta_x L) - 2\log\det(L) + \lambda \| Z \|_{1,\text{off}}\\
\text {subject to } & L-Z=0. 
\end{array}
\end{equation*}
Define the augmented Lagrangian as 
\begin{align*}
    \mathcal{L}(L,Z,\Lambda)=f(L)+h(Z)+\langle \Lambda, L-Z\rangle +\frac{\rho}{2}\|L-Z\|_F^2, 
\end{align*}
where $\langle A, B\rangle\triangleq \Tr(AB^\transpose)$ and $\Lambda=\Lambda^\top$ is the Lagrange multiplier. The parameter $\rho > 0$ is user-defined and affects only the convergence rate of the ADMM. 
Further, define
\begin{subequations}
    \begin{align}
        f(L) &= \Tr(SL \Theta_x L) - 2\log\det(L) \label{eq: first} \\
        h(Z) &= \lambda \| Z \|_{1,\text{off}}\label{eq: second}. 
    \end{align}
\end{subequations}
We can find the ADMM iterates by sequentially minimizing the augmented Lagrangian with respect to $(L, Z, \Lambda)$. Thus,
\begin{equation}
\begin{aligned}
&\begin{aligned}
& L^{k+1}=\underset{L \in \operatorname{S}_p^{++}}{\arg \min }\, f(L)+\langle \Lambda^k, L-Z^k\rangle +\frac{\rho}{2}\left\|L-Z^k\right\|_F^2 \\
& Z^{k+1}=\underset{Z \in \operatorname{S}_p^{++}}{\arg \min }\,h(Z)+\langle \Lambda^{k}, L^{k+1}\!-\!Z\rangle +\frac{\rho}{2}\left\|L^{k+1}\!-\!Z\right\|_F^2
\end{aligned}\\
&\Lambda^{k+1}=\Lambda^k+\rho\left(L^{k+1}-Z^{k+1}\right). \label{eq: aug lagrangian}
\end{aligned}
\end{equation}

Note that to implement the ADMM iterates above we need to solve only the minimization problems associated with the primal ($L$) and dual variables ($Z$). The solution to the dual minimization is given by the soft-thresholding
\begin{align}
    Z^{k+1}=\frac{1}{\rho}\operatorname{soft}(\rho L^{k+1}+\Lambda^k,\lambda), \label{eq: dual step}
\end{align}
where $\operatorname{soft}(\cdot, \cdot)$ (applied element-wise in \eqref{eq: dual step}) is 
\begin{equation*}
\operatorname{soft}(a, b)= \begin{cases}a-b, & \text { if } a>b \\ 0, & \text { if }|a| \leq b \\ a+b, & \text { if } a<-b.\end{cases} 
\end{equation*}

To solve the primal minimization (associated with $L$) in \eqref{eq: aug lagrangian} set the derivative of the loss to zero to get
\begin{align*}
    \nabla_L (f(L)+\langle \Lambda^k, L-Z^k\rangle +(\rho/2)\left\|L-Z^k\right\|_F^2)&=0\\
    2\Theta_xLS-2L^{-1}+\Lambda^k+\rho(L-Z^k)&=0\\
    2LSL-2\Theta_x^{-1}+\Theta_x^{-1}\Lambda^kL +\rho \Theta_x^{-1}(L-Z^k)L&=0.
\end{align*}
We obtain the last equality by pre-multiplying both sides of the second equation by $\Theta_x^{-1}$ and then post-multiplying by $L$. Collecting the quadratic and linear terms of $L$ finally gives us the quadratic matrix equation
\begin{align}\label{eq: MQE}
        (2LSL+\rho \Theta_x^{-1}L^2)+\Theta^{-1}_x(\Lambda^k-Z^k)L-2\Theta^{-1}_x&=0.
\end{align}
Without loss of generality, we may assume \( \Theta_x = I \), since the data can always be whitened using the transformation \( \Theta_x^{-1/2} \). As a result, the equality in \eqref{eq: MQE} simplifies to the non-symmetric algebraic Riccati equation (NARE): 
\begin{align}\label{eq: NARE}
    L(2S+\rho I)L+(\Lambda^k-\rho Z^k)L-2I=0. 
\end{align}
We discuss an interative method to solve the NARE in \eqref{eq: NARE} using Newton's method in Section \ref{sec: newton}. For the moment, define the functional routines $\texttt{NARE}()$ that solves for $L$ in \eqref{eq: NARE} and $\texttt{soft\_thresholding}()$ that implements \eqref{eq: dual step}. Our ADMM algorithm is
summarized in Alg.~\ref{alg: ADMM recipe}. To ensure that \( L^{k+1}\) in Alg.~\ref{alg: ADMM recipe} remains positive definite, we symmetrize the result and apply a small positive diagonal perturbation.

\begin{algorithm}
\SetAlgoLined
\KwIn{Sample covariance matrix $S$ and $\lambda, \rho > 0$}
\KwOut{Converged values of $L$, $Z$, and $\Lambda$}
Set $k = 0$ and initialize $Z^0, \Lambda^0, B^0 = I$\;
\Repeat{until convergence}{
    $L^{k+1} \leftarrow \texttt{NARE}(S, \Lambda^k, Z^k, \rho)$\;
    $Z^{k+1} \leftarrow \texttt{soft\_thresholding}(L^{k+1}, \Lambda^k, \rho)$\;
    $\Lambda^{k+1} \leftarrow \Lambda^k + \rho (L^{k+1} - Z^{k+1})$\;
}
\caption{ADMM for solving MLE in \eqref{eq: log-det0}}\label{alg: ADMM recipe}
\end{algorithm}

\vspace{-2.0mm}
\subsection{Newton's method for implementing $\texttt{NARE}(\cdot)$}\label{sec: newton}
NARE of the form in~\eqref{eq: NARE} can be solved iteratively using Newton’s method. We briefly outline the approach, following the exposition in~\cite{bini2011numerical}. Consider the matrix-valued nonlinear equation \( F(X) = 0 \), where \( F: \mathcal{V} \to \mathcal{V} \) and \( \mathcal{V} = \mathbb{R}^{m \times n} \). The recurrence relation forms the basis for the newton method to solve for the unknown matrix $X \in \mathbb{R}^{m \times n}$ 
\[
X_{t+1} = X_t - [F'(X_t)]^{-1} F(X_t),
\]
where \( F'(X_t) \) is the Fréchet derivative. Rather than inverting \( F'(X_t) \), one
solves $F'(X_t) H_t = -F(X_t)$ for the increment \( H_t \), updating the solution as \( X_{t+1} = X_t + H_t \).

Consider the function \( F(L) = L\tilde{S}L + (\Lambda^k - \rho Z^k)L - 2I \) for the NARE defined in \eqref{eq: NARE}, where $\tilde{S}=(2S+\rho I)$. Then, the Newton's increment \( H_t \) needed to update \( L_{t+1} = L_t + H_t \) is obtained by solving the Sylvester equation
\begin{align}\label{eq: Sylvester}
\begin{split}
    ((\Lambda^k-\rho Z^k)+L_t\tilde{S})H_t+H_t(L_t\tilde{S})=F(L_t). 
\end{split}
\end{align}
The Sylvester equations are linear in \( H_t \), and standard methods exist for solving them efficiently. Putting the components together, the \texttt{NARE}$(S, \Lambda^k, Z^k, \rho)$ routine implements Newton's method, with each increment computed by solving~\eqref{eq: Sylvester}.

 \begin{remark}\textit{\textbf{(ADMM for ICE and MLE)}}\label{rem: ADMM comparision} The sparse ICE produces a sparse estimate of \( \Theta_y \) by applying the $\ell_1$-norm to $\Theta_y$ \cite{friedman2008sparse}. Instead the MLE in \eqref{eq: log-det0} estimates \( L \) in \( \Theta_y = L \Theta_x L \) for a known $\Theta_x$ by applying the $\ell_1$-norm to  $L$. Consequently, the ADMM for ICE and our MLE appear methodologically similar, except for a crucial difference in computing primal update (line 3) of Algorithm \ref{alg: ADMM recipe}. For ICE, we need to solve the matrix quadratic equation:
\begin{equation}\label{eq: naive MQE}
    \rho\Theta_y^2+\Theta_yM=2I, 
\end{equation}
where \( M=\rho(\Lambda^k-Z^k)-S \). Fortunately, \eqref{eq: naive MQE} is a rare case in which one can solve for \( \Theta_y \succ 0\) in a closed form (see \cite[Section 6.1]{boyd2011distributed} and \cite{higham2000numerical} for details). 

On the other hand, constructing closed-form solutions for the NAREs in \eqref{eq: NARE} and \eqref{eq: MQE} is far from trivial, as the coefficient matrices must satisfy stringent conditions. Ref.~\cite{bini2011numerical} provides an up-to-date survey of such conditions for NAREs arising in transportation problems, under which one might construct closed-form solutions. Unfortunately, these conditions do not hold in our setting, necessitating new theoretical developments for our NARE, which we leave for future work. \QEDW 

\end{remark}

\section{Simulations}\label{sec: simulations}

We analyze the performance of the ADMM algorithm in Alg.~\ref{alg: ADMM recipe} on synthetic networks and two real-world benchmarks: the IEEE 33-bus power system and Bellingham water distribution system. Additionally, we compare its performance with that of CVX \cite{diamond2016cvxpy}, as suggested by \cite{rayas2022learning}.

We consider two metrics, averaged over {\color{black}{40}} instances.  The first metric is the F-score  
\[
\text{F-score} = \frac{2\text{TP}}{2\text{TP} + \text{FP} + \text{FN}} \in [0,1],
\]
where $\text{TP}$ (true positives) is the total number of correctly recovered edges; $\text{FP}$ (false positives) is the total number of incorrectly recovered edges; and finally, $\text{FN}$ (false negatives) is the number of actual edges that were not recovered. A higher F-score indicates better performance in recovering the true structure, with $\text{F-score} = 1$ signifying perfect structure recovery. The second metric is the worst-case error evaluated using $\|\hat{L}-L^*\|_{\infty}$, where $\|A\|_\infty=\sum_{j=1}^n\max_{i,j}|a_{i,j}|$, and $\hat{L}$ is the estimate returned by the ADMM or CVX. 

We plot the above error metrics versus the rescaled sample size $\tau=N/(d^2\log(p))$, where $d = \max_{1\leq i \leq p} \sum_j e_{ij}$ is the maximum degree of $L^{\ast}$ with $e_{ij} = 1$ if $L^*_{ij} \neq 0$, and $e_{ij} = 0$ else. This scaling is justified in \cite{rayas2022learning} which shows that the MLE estimator in \eqref{eq: log-det0} requires $N\approx O(d^2\log(p))$ samples\footnote{We suppress constants in our big-O notation.} to accurately recover the sparsity of $L^*$ with high probability.  
Intuitively, this result means (for a fixed size $p$), the sample complexity is determined by the degree $d$. Therefore, plotting error metrics versus the rescaled sample teases out the effect of $d$ while ignoring the absolute number of samples $N$.  

In light of the foregoing discussion on the sample complexity, we expect that the performance of the MLE \eqref{eq: log-det0}, implemented using Alg.~\ref{alg: ADMM recipe} or the CVX-based approach to exhibit a sharp transition around \( \tau = 1 \). In other words, we anticipate better structure recovery performance for \( \tau \geq 1 \) compared to \( \tau < 1 \). This sharp transition behavior serves as evidence of the (qualitative) accuracy of the algorithms.

\textit{\textbf{Synthetic networks}}: Our first set of simulations evaluates the performance of Alg.~\ref{alg: ADMM recipe} on synthetic networks with a fixed size of $p = 30$.  These networks include Erdős-Rényi, small-world (Watts-Strogatz), and scale-free (Barabási-Albert) networks, with maximum degrees $d=\{4,3,9\}$, respectively. Additionally, we consider a synthetic grid graph ($d=4$) by connecting each node to its fourth-nearest neighbor. 

\textit{\textbf{Benchmark networks}}: Each network has an associated ground truth matrix given by $L^{\ast} = A + \epsilon I_{p}$, where $\epsilon > 0$ and $A$ is the adjacency matrix that defines the edge structure of the networks below. Thus, the constructed Laplacian $L^*$ (i) captures the edge structure and (ii) is positive definite.  

 \noindent \textit{1) Power network:} We consider the IEEE 33-bus power distribution network, whose topology data is publicly available\footnote{\url{https://www.mathworks.com/matlabcentral/fileexchange/73127-ieee-33-bus-system}}. Using this dataset, we construct an adjacency matrix $A$. The corresponding network consists of 33 buses and 32 branches (edges) with a maximum degree of $d = 3$.  
\smallskip  

\noindent \textit{2) Water distribution network:} We examine the Bellingham water distribution network whose raw data files are publicly accessible\footnote{\url{https://www.uky.edu/WDST/index.html}}. The ground truth adjacency matrix $A$, containing 121 nodes and 162 edges with a maximum degree of $d = 6$, is generated by loading the raw data files into the WNTR simulator\footnote{\url{https://github.com/USEPA/WNTR}}, see \cite{seccamonte2023bilevel} for details. 

{\color{black}\textit{\textbf{Algorithmic parameters}}: 
The ADMM algorithm in Alg.\ref{alg: ADMM recipe} terminates when the primal and dual residuals fall below their respective tolerances. The primal residual is $r^k = \|L^k - Z^k\|_F$ and the dual residual is $s^k = \|\rho (Z^k - Z^{k-1})\|_F$. Convergence is declared if $r^k < \varepsilon_{\text{pri}}^k$ and  $s^k < \varepsilon_{\text{dual}}^k$, 
where $\varepsilon_{\text{pri}}^k$ and $\varepsilon_{\text{dual}}^k$ are set according to the guidelines in \cite{boyd2011distributed}. See Section~\ref{sec: CVX solver} for parameters of the CVX-based method.  

The regularization parameter \(\lambda\) was selected via a thorough grid search over \([0,1]\), optimizing for the highest F-score. The best \(\lambda\) for different networks are in Table~\ref{tab:graph_comparison}. Finally, we zeroed out the entries of \( \hat{L} \) returned by ADMM (same for CVX) if their magnitude was below \( 0.01 \). This hard thresholding is a standard practice for recovering the network.



\subsection{Results and discussion}

In Figs.~\ref{fig: synthetic data scaling_small},~\ref{fig: synthetic data scaling_large}, and~\ref{fig: real data scaling}, for various values of \( \tau \), we compute \( N \) and generate synthetic data \( \{y_t\}_{t=1}^N \) using \( y_t = (L^*)^{-1} x_t \), where \( \{x_t\} \) is a zero mean Gaussian with the inverse covariance matrix $\Theta_x$. The sample covariance matrix \( S \) is then computed as \( S = N^{-1} \sum_t y_t y_t^\top \).

Figs.~\ref{fig: synthetic data scaling_small} and \ref{fig: synthetic data scaling_large} shows the recovery performance of synthetic networks as a function of \( \tau \). The F-score increases with \( \tau \) for all networks. Grid and small-world networks achieve near-perfect F-score (\(\geq 0.99\)) around \( \tau \approx 1.5 \). In contrast, the scale-free network in Fig.~\ref{fig: synthetic data scaling_large} requires slightly more samples for high accuracy due to its high-degree hub nodes ($d=10$), but exceeds 0.95 F-score for \( \tau \approx 2 \). Similar inferences hold for the worst case metric (bottom panels of Figs.~\ref{fig: synthetic data scaling_small} and \ref{fig: synthetic data scaling_large}). Fig.~\ref{fig: real data scaling} highlights the recovery performance of benchmark networks as a function of \( \tau \). The F-score increases with \( \tau \) while the worst-case error decreases with $\tau$.

The qualitative performance of the ADMM and CVX approaches is nearly identical, which is to be expected because both the methods solve a convex optimization problem.  Table~\ref{tab:graph_comparison} presents a comparison of the ADMM and CVX-based approaches. \textbf{\textit{Both achieve similar F-scores, however, ADMM significantly outperforms CVX in iteration count and execution time.}} For e.g., in the water network case---the largest network considered---our ADMM converged in 24 iterations within 3 seconds, whereas the CVX approach required 150 iterations and 85 seconds.



\begin{figure}
\centering
\includegraphics[width=0.82\linewidth]{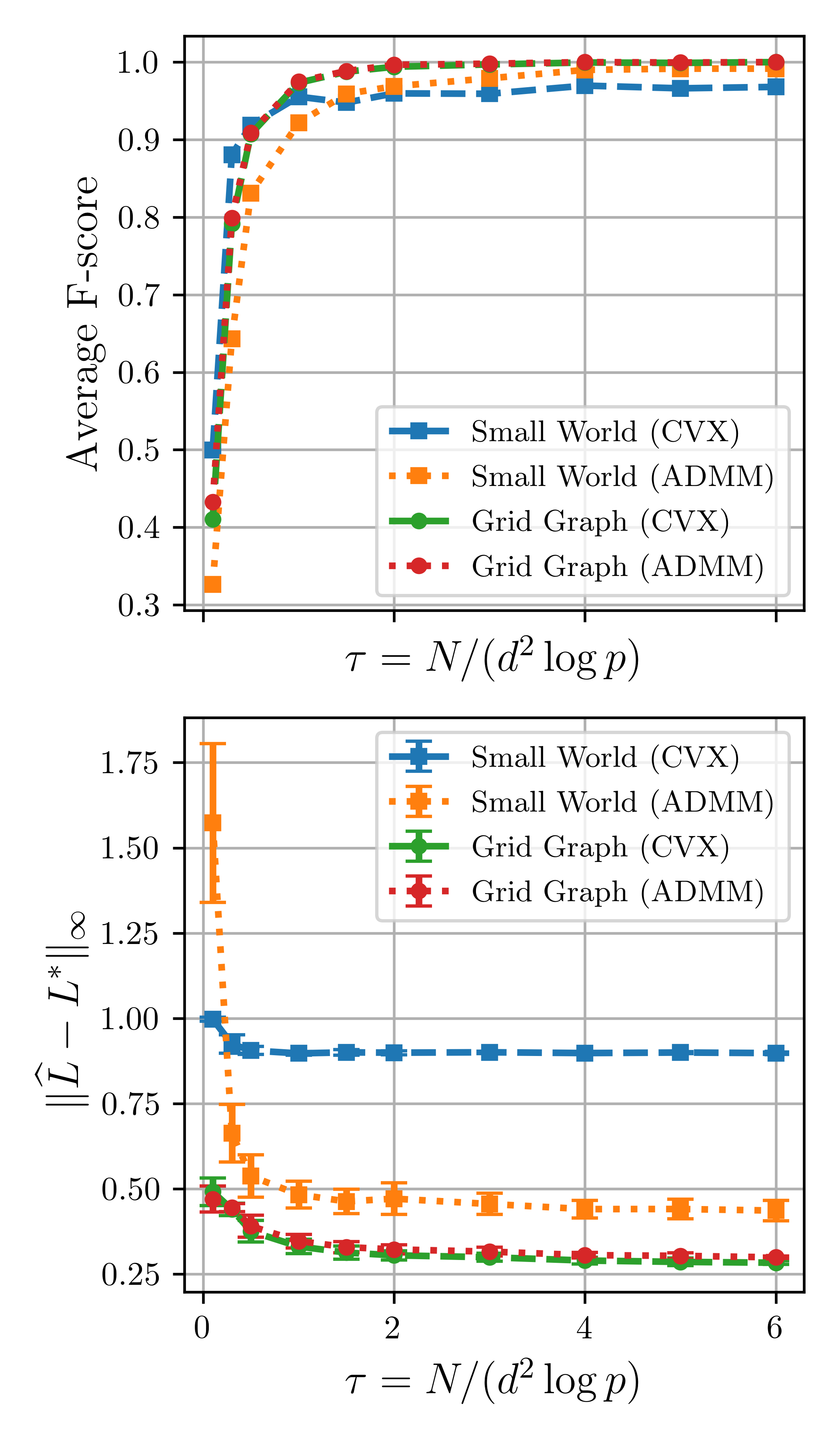}
\caption {\small {\bf Synthetic network recovery (small degree)}:  Top: Average F-score versus rescaled sample size $N / (d^2 \log p)$. Bottom: Worst-case $\ell_\infty$ error across off-diagonal entries. Both the ADMM- and CVX-based methods achieve comparable performance.}\label{fig: synthetic data scaling_small}
\end{figure}

\begin{figure}
\centering
\includegraphics[width=0.82\linewidth]{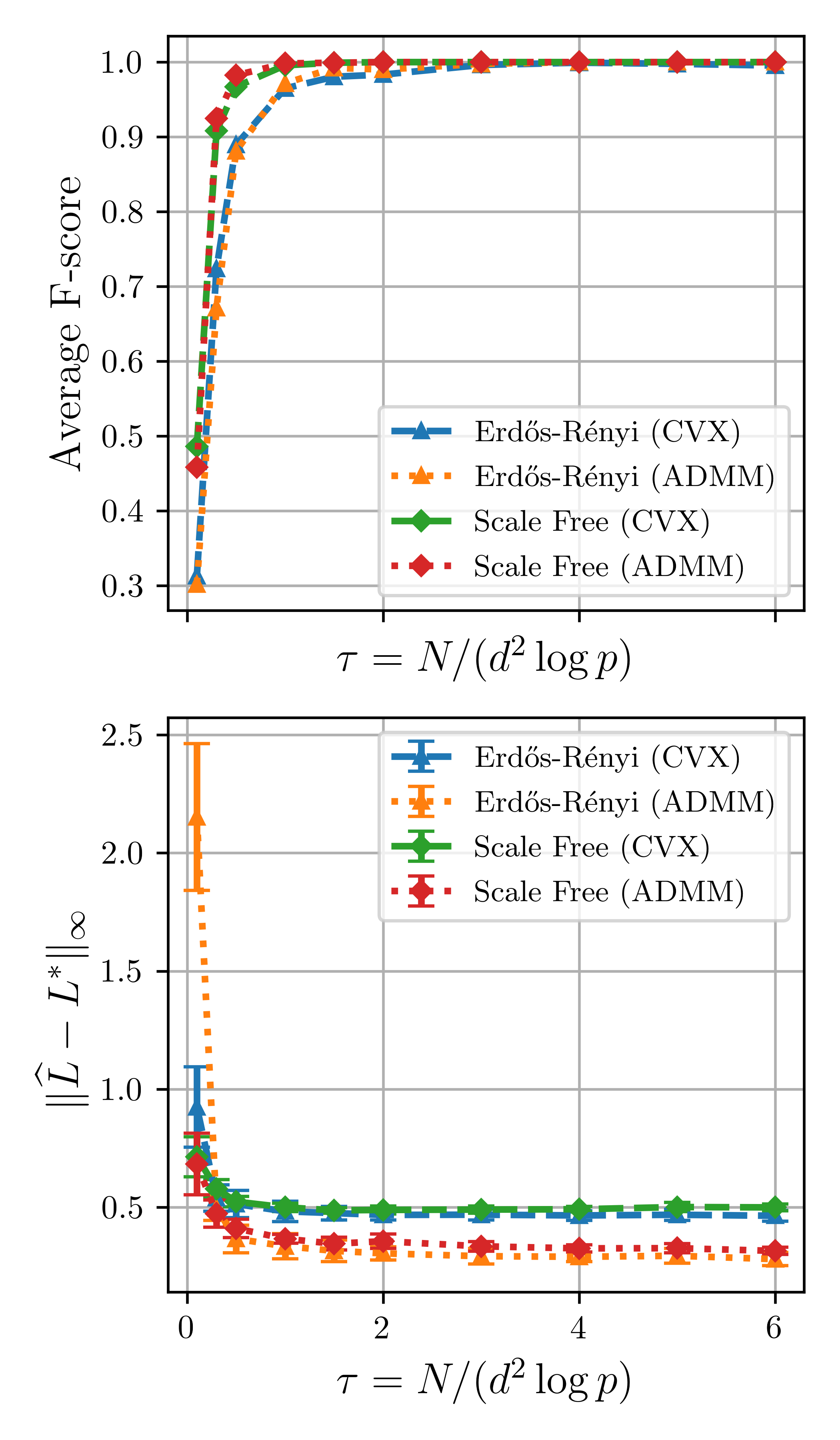}
\caption {\small {\bf Synthetic network recovery (large degree)}:  Top: Average F-score versus rescaled sample size $N / (d^2 \log p)$. Bottom: Worst-case $\ell_\infty$ error across off-diagonal entries. Both the ADMM- and CVX-based methods achieve comparable performance.
}\label{fig: synthetic data scaling_large}
\end{figure}

\begin{figure}
\centering
\includegraphics[width=0.82\linewidth]{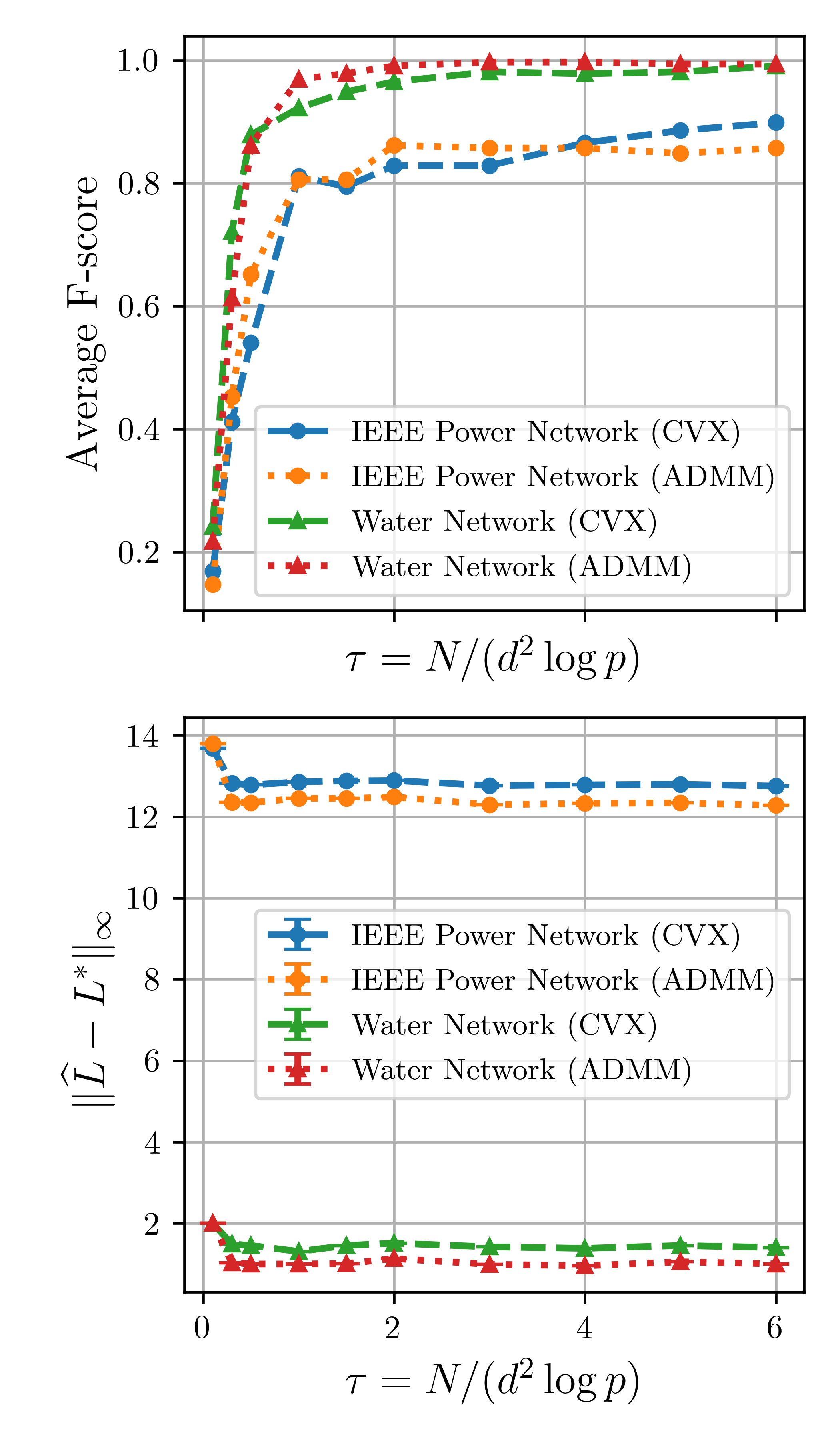}
\caption {\small {\bf Benchmark networks recovery}:  Top: Average F-score versus rescaled sample size $N / (d^2 \log p)$. Bottom: Worst-case $\ell_\infty$ error across off-diagonal entries. Both the ADMM- and CVX-based algorithms achieve excellent F-scores for the water network, but perform less well for the IEEE power network.
}\label{fig: real data scaling}
\end{figure}


\begin{table*}[h]
    \centering
    \renewcommand{\arraystretch}{1.5} 
    \setlength{\tabcolsep}{9pt} 
        \caption{\text{ADMM vs CVX approach}}
    \begin{tabular}{|l|c|c|c|c|c|c|}
        \hline
        \textbf{Network} & \makecell{\textbf{Best} $\lambda$ \\ (ADMM / CVX)} & \makecell{\textbf{Best} F-score \\ (ADMM / CVX)} & \makecell{\textbf{Samples} {$N$}\\ $(\tau=4)$} & \makecell{\textbf{Iterations} \\ (ADMM / CVX)} & \makecell{\textbf{Time (sec)} \\ (ADMM / CVX)} \\
        \hline
        Small world ($p=30$,  $d=4$)  & 0.333 / 0.889 & 0.951 /\textbf{1.000} & $54 $ & $\mathbf{83}$ / $325$ &  $\mathbf{0.20}$ / $1.16$ \\
        \hline
        Erdős-Rényi ($p=30$,  $d=5$) & 0.222 / 0.220 & \textbf{0.984} / 0.969 & $85$ & $\mathbf{75}$ / $450$ & $\mathbf{0.15}$ / $1.55$ \\
        \hline
        Scale free ($p=30$  $d=10$)  & 0.111 / 0.111 & \textbf{1.000} / 0.983 & $340$ & $\mathbf{56}$ / $525$ &  $\mathbf{0.25}$ / $1.84$\\
        \hline
        Grid  ($p=30, d=4$)  & 1.000 / 0.444     & 0.950 / 0.950  & $54$ & $\mathbf{265}$ / $275$ &  $\mathbf{0.77}$ / $0.94$\\
        \hline
        IEEE power ($p=33$,  $d=3$)  & 0.333 / 0.333 & \textbf{0.831} / 0.821 & $31$ & $\mathbf{36}$ / $200$ &  $\mathbf{0.323}$ / $1.10$\\
        \hline
        Water ($p=120$, $d=5$) & 0.222 / 0.222 & \textbf{0.954} / 0.949 & $119$ & $\mathbf{24}$ / $150$ &  $\mathbf{3.01}$ / $85.83$\\
        \hline
    \end{tabular}
    \label{tab:graph_comparison}
\end{table*}

\subsection{Reproducible results and other limitations} \label{sec: CVX solver}
All experiments were implemented in Python 3.11.5 on a Windows 11 machine with an Intel Core i7-8665U CPU @ 1.90GHz (4 cores and 8 threads). The CVX-based method was implemented following the repository\footnote{\url{https://github.com/AnirudhRayas/SLNSCL}} provided in \cite{rayas2022learning}, using CVXPY 1.6.4 with the SCS solver v3.2.7.post2 and SciPy 1.15.2. For SCS solver we set the convergence tolerance as $1e-4$. The results presented in this paper are fully reproducible using our code available in the repository\footnote{\url{https://github.com/rohithmada00/NARE_graphical_IID}}.  


{\color{black} A potential criticism of our ADMM approach is the lack of multiple comparative methods. Since the MLE \eqref{eq: log-det0} is new, we are unaware of existing methods for comparison except for the CVX-based approach in \cite{rayas2022learning}. While we could have included the Graphical LASSO-based two-hop method, we consider this unnecessary, as \cite{rayas2022learning} demonstrated that the MLE in \eqref{eq: log-det0} outperforms the two-hop method. Our contribution lies in providing an efficient algorithm for solving \eqref{eq: log-det0}. However, motivated by recent advances in ICE problem \cite{eftekhari2024algorithm}, we plan to develop a suite of efficient algorithms for \eqref{eq: log-det0}.}

\section{Conclusions}
This paper addressed the structure learning problem in equilibrium networks, formulated as an $\ell_1$-regularized maximum likelihood estimation problem. As a modest theoretical contribution, we demonstrated that, in the unregularized case, the estimator admitted a pleasing analytical expression.

To solve the learning problem, the paper introduced an ADMM algorithm. While the dual and Lagrange multiplier updates admit a closed form, the primal step lacked such form and required solving a non-symmetric algebraic Riccati equation (NARE). This was handled iteratively using a Newton’s method. Our results showed that, for nearly all networks, the ADMM and CVX solvers achieve comparable performance; however, ADMM converges in fewer iterations and have lower time complexity.

Several directions remain open for future investigation. A key avenue is to establish theoretical convergence rates for the ADMM iterates. Another important challenge is to derive conditions that ensure the existence of a symmetric positive definite solution to the NARE, and to further explore both Newton-based iterative schemes and invariant subspace methods. Finally, incorporating explicit Laplacian constraints, as  in~\cite{kumar2019structured}, remains an interesting extension.

\bibliographystyle{unsrt}
\bibliography{BIB.bib}

\begin{thebibliography}{10}

\bibitem{rayas2022learning}
Anirudh Rayas, Rajasekhar Anguluri, and Gautam Dasarathy.
\newblock Learning the structure of networked systems obeying conservation laws.
\newblock {\em NeurIPS}, 35:14637--14650, 2022.

\bibitem{van2017modeling}
Arjan van~der Schaft.
\newblock Modeling of physical network systems.
\newblock {\em Systems \& Control Letters}, 101:21--27, 2017.

\bibitem{deka2023learning}
Deepjyoti Deka, Vassilis Kekatos, and Guido Cavraro.
\newblock Learning distribution grid topologies.
\newblock {\em IEEE Transactions on Smart Grid}, 2023.

\bibitem{deka2020graphical}
Deepjyoti Deka, Saurav Talukdar, Michael Chertkov, and Murti~V Salapaka.
\newblock Graphical models in meshed distribution grids: Topology estimation, change detection \& limitations.
\newblock {\em IEEE Transactions on Smart Grid}, 11(5):4299--4310, 2020.

\bibitem{cavraro2021learning}
Guido Cavraro, Vassilis Kekatos, Liang Zhang, and Georgios~B Giannakis.
\newblock Learning power grid topologies.
\newblock {\em Advanced Data Analytics for Power Systems}, pages 3--27, 2021.

\bibitem{rayas_stationary2025}
Anirudh Rayas, Jiajun Cheng, Rajasekhar Anguluri, Deepjyoti Deka, and Gautam Dasarathy.
\newblock Learning networks from wide-sense stationary stochastic processes.
\newblock {\em IEEE Transactions on Signal and Information Processing over Networks}, 11:655--669, 2025.

\bibitem{boyd2011distributed}
Stephen Boyd, Neal Parikh, Eric Chu, Borja Peleato, Jonathan Eckstein, et~al.
\newblock Distributed optimization and statistical learning via the alternating direction method of multipliers.
\newblock {\em Foundations and Trends{\textregistered} in Machine learning}, 3(1):1--122, 2011.

\bibitem{ma2014gentle}
Shiqian Ma and Mingyi Hong.
\newblock A gentle introduction to admm for statistical problems.
\newblock {\em Wiley StatsRef: Statistics Reference Online}, pages 1--13, 2014.

\bibitem{lin2022alternating}
Zhouchen Lin, Huan Li, and Cong Fang.
\newblock {\em Alternating direction method of multipliers for machine learning}.
\newblock Springer, 2022.

\bibitem{ardakanian2019identification}
Omid Ardakanian, Vincent~WS Wong, Roel Dobbe, Steven~H Low, Alexandra von Meier, Claire~J Tomlin, and Ye~Yuan.
\newblock On identification of distribution grids.
\newblock {\em IEEE Transactions on Control of Network Systems}, 6(3):950--960, 2019.

\bibitem{brouillon2021bayesian}
Jean-S{\'e}bastien Brouillon, Emanuele Fabbiani, Pulkit Nahata, Keith Moffat, Florian D{\"o}rfler, and Giancarlo Ferrari-Trecate.
\newblock Bayesian error-in-variables models for the identification of power networks.
\newblock {\em arXiv preprint arXiv:2107.04480}, 2021.

\bibitem{babakmehr2016smart}
Mohammad Babakmehr, Marcelo~Godoy Simoes, Michael~B Wakin, Ahmed Al~Durra, and Farnaz Harirchi.
\newblock Smart-grid topology identification using sparse recovery.
\newblock {\em IEEE Transactions on Industry Applications}, 52(5):4375--4384, 2016.

\bibitem{rayas2023differential}
Anirudh Rayas, Rajasekhar Anguluri, Jiajun Cheng, and Gautam Dasarathy.
\newblock Differential analysis for networks obeying conservation laws.
\newblock In {\em 2023 IEEE International Conference on Acoustics, Speech and Signal Processing (ICASSP)}, pages 1--5. IEEE, 2023.

\bibitem{rayas2024learning}
Anirudh Rayas, Jiajun Cheng, Rajasekhar Anguluri, Deepjyoti Deka, and Gautam Dasarathy.
\newblock Learning networks from wide-sense stationary stochastic processes.
\newblock {\em arXiv preprint arXiv:2412.03768}, 2024.

\bibitem{kumar2019structured}
Sandeep Kumar, Jiaxi Ying, Jos{\'e}~Vin{\'\i}cius de~Miranda~Cardoso, and Daniel Palomar.
\newblock Structured graph learning via {L}aplacian spectral constraints.
\newblock {\em NeurIPS}, 32, 2019.

\bibitem{dykstra1970establishing}
Richard~L Dykstra.
\newblock Establishing the positive definiteness of the sample covariance matrix.
\newblock {\em The Annals of Mathematical Statistics}, 41(6):2153--2154, 1970.

\bibitem{bhatia2009positive}
Rajendra Bhatia.
\newblock {\em Positive definite matrices}.
\newblock Princeton university press, 2009.

\bibitem{bini2011numerical}
Dario~A Bini, Bruno Iannazzo, and Beatrice Meini.
\newblock {\em Numerical solution of algebraic Riccati equations}.
\newblock SIAM, 2011.

\bibitem{friedman2008sparse}
Jerome Friedman, Trevor Hastie, and Robert Tibshirani.
\newblock Sparse inverse covariance estimation with the graphical lasso.
\newblock {\em Biostatistics}, 9(3):432--441, 2008.

\bibitem{higham2000numerical}
Nicholas~J Higham and Hyun-Min Kim.
\newblock Numerical analysis of a quadratic matrix equation.
\newblock {\em IMA Journal of Numerical Analysis}, 20(4):499--519, 2000.

\bibitem{diamond2016cvxpy}
Steven Diamond and Stephen Boyd.
\newblock {CVXPY}: {A} {P}ython-embedded modeling language for convex optimization.
\newblock {\em Journal of Machine Learning Research}, 17(83):1--5, 2016.

\bibitem{seccamonte2023bilevel}
Francesco Seccamonte.
\newblock {\em Bilevel Optimization in Learning and Control with Applications to Network Flow Estimation}.
\newblock PhD thesis, UC Santa Barbara, 2023.

\bibitem{eftekhari2024algorithm}
Aryan Eftekhari, Lisa Gaedke-Merzh{\"a}user, Dimosthenis Pasadakis, Matthias Bollh{\"o}fer, Simon Scheidegger, and Olaf Schenk.
\newblock Algorithm 1042: Sparse precision matrix estimation with squic.
\newblock {\em ACM Transactions on Mathematical Software}, 50(2):1--18, 2024.

\end{thebibliography}

\end{document}